\documentclass[amsart, 12pt]{amsart}

\usepackage{amssymb,amsfonts,amsthm,amsmath,latexsym}
\usepackage{enumerate, caption, array, esint}

\usepackage{pgfplots}
\pgfplotsset{compat=1.15}
\usepackage{mathrsfs}
\usetikzlibrary{arrows}
\usepackage{subcaption}

\usepackage[margin=1.1in]{geometry}

\makeatletter
\newcommand*{\rom}[1]{\expandafter\@slowromancap\romannumeral #1@}
\makeatother

\theoremstyle{plain}
\newtheorem{thm}{Theorem}[section]

\newtheorem{lem}[thm]{Lemma}
\newtheorem{prop}[thm]{Proposition}

\newtheorem{example}[thm]{Example}

\theoremstyle{definition}
\newtheorem{defn}[thm]{Definition}

\theoremstyle{remark}
\newtheorem{rem}[thm]{Remark}

\numberwithin{equation}{section}

\newcommand{\hide}[1]{{}}

\theoremstyle{plain}

\begin{document}

\title{Validated numerics for the Gauss problem on Continued Fractions}

\author{M. Pollicott }

\address{Department of Mathematics, Warwick University, Coventy, CV4 7AL-UK}
\email{masdbl@warwick.ac.uk}

\date{\today}

\maketitle

\section{Introduction}
It is a classical area  of research to  understand  the statistical properties of continued fractions.  One of the earliest and most famous  such problem   was  studied by Gauss and  involved the tails of continued fraction expansions
More precisely, for any irrational $0 < x < 1$ there is an infinite continued fraction expansion
$$
x = \frac{1}{a_1 + \frac{1}{a_2 + \cdots + \frac{1}{a_{n-1} + \frac{1}{a_{n} + \cdots}}}}
$$
which, for any $n \geq 1$,  we can write as 
$$
x = \frac{1}{a_1 + \frac{1}{a_2 + \cdots + \frac{1}{a_{n-1} +\xi_{n}}}}
\hbox{ where }
\xi_n = \frac{1}{a_n + \frac{1}{a_{n+1}+\frac{1}{a_{n+2} + \frac{1}{a_{n+3} + \cdots}}}}
\in (0,1)
$$
In particular, we have the inductive definition $\xi_n =  \{\frac{1}{\xi_{n-1}}\}$ where $\{ \cdot \}$ denotes the fractional part.
The distribution of the sequence $(\xi_n)_{n=1}^\infty$ for a typical $x$ was considered  by Gauss 
and appears as  a note in his mathematical notebook.  On 25 October 1800
he wrote that he had a simple argument that
 if $F_n(x)$  is the probability that  $\xi_n<x$ for a random $x$, then
$$ \lim_{n\to \infty}F_n(x)= \frac{\log (1+x)}{\log 2} \eqno(1)$$
But in a letter to Laplace dated January 30, 1812  Gauss said 
that he had not  solved satisfactorily this problem \cite{Gauss-letter}.
\footnote{To quote from Gauss's  letter: ``Je me rappelle pourtaat d’un probleme curieux duquel je me suis occupd il y a 12 ans, mais lequel je n’ai pas rdussi alors k resoudre k ma satisfaction. Peut etre daignerez-vous en occuper quelques moments: dans ce cas je suis sur que vous trouverez une solution plus complete.''}

The first published proof of (1) appeared in 1928 and was due to Kuzman \cite{Kuzmin}, 
where he actually showed the stronger result  that 
$$F_n(x) = \log (1+x) + O(q^{\sqrt{n}}) \hbox{ as $n \to +\infty$ }$$
 for some $0 < q < 1$
(see \cite{Khintchine} for an  interesting  exposition).
In 1929, P. Levy improved this result to 

$$F_n(x) = \log (1+x) + O(q^n) \hbox{ as $n \to +\infty$ }$$
with $q = 3.5 - 2\sqrt{2} = 0.67157\ldots$.  Subequently, Sz\"usz refined the error to 
$q < 0.4$ \cite{Szusz} and Schweiger \cite{Schweiger} gave a rigorous bound of $q \leq \frac{2}{3+\sqrt{5}} = 0.381966\ldots$.  
  In 1974, Wirsing further improved these estimates  to 
$$
F_n(x) = \log (1+x) + (-\lambda)^n \Phi(x) (1+o(1))
\hbox{ as $n \to +\infty$}
\eqno(2)
$$
where $\lambda = 0.30366 30028 98732 65860 \ldots $ was given to $20$ decimal places 
  \footnote{Mayer quoted this value  from Wirsing's work, but adds that ``the accuracy was not clear''.  In fact, it is correct except for the last digit}
 and $\Phi:[0,1] \to \mathbb R$ is a  real analytic function
 \footnote{This gave a partial  answer to Exercise 22 on page 337 of the first edition of \cite{Knuth} about the convergence.}
  \cite{Wirsing}.  
 
 The value of $\lambda$  plays a role in estimates on the running time of the Euclidean Algorithm (cf.  \cite{Knuth})
The numerical value of $\lambda$ was given  to 35 decimal places
\footnote{Although they presented an estimate accurate to $35$ decimal places they only expressed confidence in the value to $30$ decimal places}
 by Flajolet and Vallée 
\cite{Flajolet}, \cite{Flajolet2}
in 1995 
as 
$$
\lambda =  0.30366 30028 98732 65859 74481 21901 55623 \ldots
$$
(and in \cite{Daude} there is a similar estimate disagreeing only in the last place).
 In 2003, Briggs presented a value which he stated was  ``probably accurate to 468 decimals.''\cite{Briggs}.
 In 2026, Nisoli estimated $\lambda$ rigorously to 175 places.

The purpose of this note is to show that the transfer operator approach of Mayer-Roepstorff can be adapted, and combined with a Lagrange-Chebychev interpolation method and the use of Hurwicz zeta functions,   to give accurate estimates on $\lambda$ which are easily validated.

\section{The transfer operator}
The problem can be rewritten in terms of the {\it Gauss map} (or {\it continued fraction transformation})
$T: [0,1] \to [0,1]$ defined by
$$
T(x) = 
\begin{cases}
\left\{\frac{1}{x}\right\} &\hbox{ if } 0 <  x \leq 1 \cr
0 &\hbox{ if } x=0 \cr
\end{cases}
$$
and then 
$F_n(x) = \ell \left(T^{-n}[0,1] \right)$.
\footnote{In \cite{MM} an interesting extension to finite group skew products is proved.}

We next  introduce a convenient Banach space based one one originally described   by Mayer and Roepstorff \cite{MR2}. Let $ D = \{z \in \mathbb C \hbox{ : } |z-1| < \frac{3}{2}\}$ and denote 
the closed disk $ \overline D = \{z \in \mathbb C \hbox{ : } |z-1| \leq  \frac{3}{2}\}$.
\begin{defn}
Let $\mathcal B$ be the Banach space of analytic functions $f: D \to \mathbb C$ 
for which
\begin{enumerate}
\item If $x \in [0, 1]$ then $f(x) \in \mathbb R$; and 
\item
 the function $f(z)$ and the derivative $f'(z)$ extend as continuous  bounded functions to 
  $\overline D$
  \end{enumerate}
The norm on  $\mathcal B$ is taken to be
$$
\|f \| = \max\left\{
\sup_{z \in \overline D} |f(z)|,  \quad\sup_{z \in \overline D} |f'(z)|
\right\}.
$$
\end{defn}
The analytic functions on $D$ which extend continuously to the boundary are a more familiar Banach space with respect to the supremum norm.  However, $\mathcal B$ is seen to be a closed subspace of this space.

We can now recall  a standard linear operator.

\begin{defn}
The {\it  transfer operator}  $\mathcal L : \mathcal B \to \mathcal B$ 
associated to  the Gauss map  is the bounded linear operator
given by
$$
\mathcal L w(x) = \sum_{n=1}^\infty \frac{1}{(x+n)^2} w\left(\frac{1}{n+1} \right).
$$
\end{defn}

\noindent 
The following properties are well known \cite{Mayer}.
\begin{enumerate}
\item
The operator $\mathcal L:  \mathcal B \to \mathcal B$ is a compact 
operator (in fact, nuclear) and has isolated eigenvalues accumulating to zero.
\item
There is a  unique $T$-invariant  probability measure $\mu$
(usually called  the 
Gauss measure) 
 equivalent to Lebesgue measure $\ell$
 with density  $\frac{d\mu}{d\ell} = \frac{1}{\log 2 (1+x)}$,  occurring as the eigenvector for the simple maximal eigenvalue $1$.
 \item
 The second eigenvalue $\lambda_2$ 
(ordered by decreasing modulus) is also  simple and isolated. 
 \end{enumerate}
 
   Moreover, since by the change of variable formula one can write
$ \int_0^1 (\mathcal L^n \chi_{[0,x]})(t) dt =  \int_0^1 \chi_{[0,x]}dt$ for $n \geq 0$.
We can then write 
$$
F_n(x) = \int_0^1 (\mathcal L^n \chi_{[0,x]})(t) dt \to \frac{1}{\log 2} \int_0^x \frac{du}{1+u}
= \frac{\log (1+x)}{\log 2}
$$
(i.e., the Gauss theorem).
The essence of Wirsing's  approach was that the  exponential speed of convergence $-\lambda$ corresponds to the second eigenvalue $\lambda_2$ (in modulus) for $\mathcal L$, whose value we would therefore like to accurately estimate. 

\section{Properties of the transfer operator}
Since we are modifying the definitions of the spaces and cones from \cite{MR2} we need to check that the results we require still hold.
\subsection{A function space}
We want to further restrict to the subspace
$$
\mathcal B_{\mathbb R} = \left\{
f \in \mathcal B \hbox{ : } f(x) \in \mathbb R \hbox{ for } x \in [0,1] \hbox{ and } \int_0^1 f d\mu=0
\right\}
$$
of functions  which map $[0,1]$ to the real line. \footnote{Formally this definition appears  different from that in \cite{MR2} although the spaces are actually the same}
By the definition we can see that $\mathcal L: \mathcal B_{\mathbb R} \to \mathcal B_{\mathbb R}$.

\subsection{A cone}
We can now consider the cone
$$
C = \left\{ f \in \mathcal B_{\mathbb R} \hbox{ : } f'(x) \geq 0 \hbox{ for } 0 \leq x \leq 1 \right\}
$$

We now want to establish some properties of the cone.

\begin{lem} $C$ is a proper cone (i.e.,  if $f \in \mathcal C$ and $- f \in \mathcal C $ then $f=0$).
\end{lem}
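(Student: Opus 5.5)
If $f\in C$ and $-f\in C$, then for every $x\in[0,1]$ we have both $f'(x)\ge 0$ and $-f'(x)\ge 0$. Hence $f'\equiv 0$ on $[0,1]$, and the plan is to turn this into $f=0$ in three steps.

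First, since $f$ is analytic on the connected domain $D$, which contains $[0,1]$, the function $f'$ is analytic on $D$. It vanishes on a set with accumulation points, so the identity theorem gives $f'\equiv 0$ on all of $D$. Therefore $f$ is constant on $D$, say $f\equiv c$. By continuity of the extension, $f\equiv c$ on $\overline D$ as well.

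Second, $f\in\mathcal B_{\mathbb R}$ forces $c$ to be real, since $f(x)\in\mathbb R$ for $x\in[0,1]$. The same membership also gives $\int_0^1 f\,d\mu=0$. Because $\mu$ is a probability measure (the Gauss measure from the earlier list of properties), this integral equals $c\,\mu([0,1])=c$. Hence $c=0$ and $f=0$, which is what properness requires.

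For completeness I would also note the routine facts that $C$ is closed under addition and under multiplication by nonnegative scalars, since the condition $f'(x)\ge0$ is preserved by both. The only step with any content is the passage from $f'=0$ on $[0,1]$ to $f$ being constant on $D$. Without the mean-zero normalisation in $\mathcal B_{\mathbb R}$, nonzero constants would lie in $C\cap(-C)$, so the normalisation is exactly what makes the cone proper. I expect no real obstacle beyond invoking the identity theorem correctly.
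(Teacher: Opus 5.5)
Your proof is correct and follows the paper's argument: $f'\equiv 0$ on $[0,1]$ makes $f$ constant, and the condition $\int_0^1 f\,d\mu=0$ forces that constant to be zero. Your explicit use of the identity theorem to get constancy on all of $D$ is a small but worthwhile tightening. The paper only asserts constancy on the real segment.
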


\begin{proof}
Since  $f \in C$ we have  that both $f'(x), -f'(x) \leq 0$ when $0 \leq x \leq 1$ and thus $f'(x) = 0$, which implies that $f: D_{\mathbb R} \to \mathbb R$ is a constant function.
Moreover, since $ \int_0^1 f d\mu=0$ we can conclude $f=0$.
\end{proof}

\begin{lem} $C$ is reproducing  (i.e.,  if $f \in \mathcal B_{\mathbb R}$  then we can choose $f_1, f_2 \in C$ such that $f = f_1 - f_2$).
\end{lem}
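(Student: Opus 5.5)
The plan is the standard one for cones defined by a monotonicity condition: add a large multiple of a fixed element with strictly positive derivative on $[0,1]$. Concretely, I would choose a reference function $h \in C$ whose derivative is bounded below by a positive constant on $[0,1]$. A natural choice is $h(z) = z - c$, with the constant $c = \int_0^1 x\, d\mu(x)$ chosen so that $\int_0^1 h\, d\mu = 0$. Then $h$ is entire, real on $[0,1]$, and $h$ and $h'$ are bounded on $\overline D$, so $h \in \mathcal B_{\mathbb R}$. Moreover $h'(x) = 1 > 0$ on $[0,1]$, so $h \in C$.

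Given $f \in \mathcal B_{\mathbb R}$, I would set $M = \sup_{x\in[0,1]} |f'(x)|$. This is finite, and in fact $M \le \|f\|$, since $[0,1] \subset \overline D$ and the norm controls $\sup_{\overline D}|f'|$. Define
$$ f_2 = M h, \qquad f_1 = f + M h, $$
so that $f = f_1 - f_2$ trivially. Both $f_1$ and $f_2$ lie in $\mathcal B_{\mathbb R}$, because that space is a linear subspace: the conditions of being real on $[0,1]$, analytic with $f,f'$ extending continuously and boundedly to $\overline D$, and having zero $\mu$-integral are all preserved under real linear combinations.

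It remains to check the cone condition for each piece. For $f_2$ we have $f_2'(x) = M \ge 0$. For $f_1$ we have $f_1'(x) = f'(x) + M \ge -M + M = 0$ for $0 \le x \le 1$. Here we use that $f$ is real on $[0,1]$, hence $f'(x)$ is real there, so the inequality makes sense.

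There is no serious obstacle. The only point needing care is that $f'$ is real-valued on $[0,1]$, so that the condition $f' \ge 0$ is meaningful. This follows because $f$ is real on the real segment and analytic, so its derivative along the real axis is the real derivative. One should also verify the normalization $\int_0^1 h \, d\mu = 0$, which is immediate from the choice of $c$. As a bonus, the argument gives the quantitative bound $\|f_i\| \le (1 + \|h\|)\|f\|$, though this is not needed for the statement.
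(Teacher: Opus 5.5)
Your proof is correct and is essentially the paper's argument: add $M=\sup_{0\le x\le 1}|f'(x)|$ times a linear function with derivative $1$, which absorbs the negative part of $f'$. Your choice $h(z)=z-c$ with $c=\int_0^1 x\,d\mu(x)$ in fact repairs two slips in the paper's version: the paper uses $z-1$, whose $\mu$-integral is $\frac{1}{\log 2}-2\neq 0$, so it does not lie in $\mathcal B_{\mathbb R}$; and it writes $f=f_1-f_2$, whereas its own definitions give $f=f_2-f_1$.
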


\begin{proof}
Observe that $w(z) = z-1 \in \mathcal B_{\mathbb R}$ since 
$w'(x) = 1$ and $\int w d\mu=0$.
Given any $f \in \mathcal B_{\mathbb R}$ let 
$$M = \sup_{0 \leq x \leq 1}|f'(x)| < +\infty.$$
Since $C$ is a cone we have that $f_1(z) = M w(z)$.  Moreover, $f_2(z) = f(z) + f_1(z)$ satisfies
$$
f_2'(x) = f'(x) + f_1'(x) = f'(x) + M  \geq 0 \hbox{ for } 0 \leq x \leq 1
$$
and so $f_2 \in C$.  Finally, by construction $f(z) = f_1(z) - f_2(z)$.
\end{proof}

\begin{lem} $C$ has non-empty interior
\end{lem}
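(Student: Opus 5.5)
The natural candidate for an interior point is the function $w(z) = z-1$ already used in the proof that $C$ is reproducing. By that proof, $w \in \mathcal B_{\mathbb R}$, and $w'(x) = 1$ on $[0,1]$. So $w$ lies in $C$ with derivative bounded away from zero on $[0,1]$. I will show that a whole norm-ball around $w$ in $\mathcal B_{\mathbb R}$ lies in $C$, which is exactly what it means for $C$ to have non-empty interior (relative to $\mathcal B_{\mathbb R}$, the ambient space of the cone).

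The key observation is that the norm on $\mathcal B$ controls the derivative directly. For any $g \in \mathcal B$ we have
$$
\sup_{0 \le x \le 1} |g'(x)| \le \sup_{z \in \overline D} |g'(z)| \le \|g\|,
$$
since $[0,1] \subset \overline D$. Now take $f \in \mathcal B_{\mathbb R}$ with $\|f - w\| < 1$. Then for every $0 \le x \le 1$,
$$
f'(x) \ge w'(x) - |f'(x) - w'(x)| \ge 1 - \|f - w\| > 0 .
$$
Since $f \in \mathcal B_{\mathbb R}$ by assumption, this gives $f \in C$. Hence the open ball of radius $1$ about $w$ in $\mathcal B_{\mathbb R}$ is contained in $C$, and $w$ is an interior point.

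There is essentially no obstacle. The only points to check are that the real-valuedness and zero-mean conditions are not interior conditions of the cone itself; they are built into the ambient space $\mathcal B_{\mathbb R}$, which is a closed subspace of $\mathcal B$. So interior must be understood relative to $\mathcal B_{\mathbb R}$. This is the only sensible reading, since $\mathcal B_{\mathbb R}$ has empty interior in $\mathcal B$. The zero-mean property of $w$ was already verified in the previous lemma. For completeness, it follows because $\int_0^1 \frac{x-1}{(1+x)\log 2}\,dx = 0$ is not literally true, but the paper asserts $\int w\,d\mu = 0$ there. If that fails, I would replace $w$ by $w - \int_0^1 w\,d\mu$. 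This has the same derivative $1$, so the argument goes through verbatim.
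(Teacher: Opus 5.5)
Your proof is correct and follows the paper, which simply asserts that $w(z)=z-1$ lies in $\mathrm{int}(C)$; your estimate $f'(x)\ge 1-\|f-w\|$ on $[0,1]$ supplies the omitted justification that the open unit ball about $w$ in $\mathcal B_{\mathbb R}$ is contained in $C$. You are also right that $\int_0^1 w\,d\mu=\frac{1}{\log 2}-2\neq 0$, so the paper's $w$ is not actually in $\mathcal B_{\mathbb R}$. Your replacement $w-\int w\,d\mu = z+1-\frac{1}{\log 2}$ still has derivative $1$ and is the right fix.
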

\begin{proof}
We need only observe that $w \in \hbox{int}(C)$. 
\end{proof}

\subsection{An operator}
It is convenient to  conjugate the operator 
$\mathcal L$ by multiplication by $(x+1)$ 
to get 
the linear operator $\mathcal M : \mathcal B_{\mathbb R} \to \mathcal B_{\mathbb R} $ of the form 
$$\mathcal M w(x) = (x+1) \mathcal L\left(\frac{w(x)}{x+1} \right)
\hbox{ for } w \in \mathcal B_{\mathbb R} 
$$ which can be rewritten as  
$$
\mathcal M w(x) = (x+1) \sum_{n=1}^\infty \frac{1}{(x+n)(x+n+1)} w\left(\frac{1}{x+1} \right).
$$
Although the operators have the same spectrum, the advantage of studying $\mathcal M$ is that it preserves the constants.

\begin{lem}
The operator  $\mathcal M : \mathcal B_{\mathbb R} \to \mathcal B_{\mathbb R} $ preserves the cone $C$.
\end{lem}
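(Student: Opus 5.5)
The plan is to differentiate $\mathcal M w$ term by term on $[0,1]$ and read off the sign of $(\mathcal M w)'$ from the sign of $w'$ on $[0,1]$. First I check that $\mathcal M w$ lies in $\mathcal B_{\mathbb R}$ again. Realness on $[0,1]$ is clear because every coefficient is real there and every argument $\frac{1}{x+n}$ lies in $(0,1]$. The condition $\int_0^1 \mathcal M w\, d\mu = 0$ follows from the change of variables identity $\int_0^1 \mathcal L^n \phi\, dt=\int_0^1\phi\, dt$ recalled in Section 2, applied with $\phi = w/(x+1)$. Term-by-term differentiation on $\overline D$ is justified because the series and its derivative converge uniformly: the coefficients are $O(n^{-2})$ and the branches $z\mapsto \frac{1}{z+n}$ map $\overline D$ into a compact subset of $D$ for $n\ge 1$.

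The core step uses the identity $\sum_{n\ge1}\frac{1}{(x+n)(x+n+1)}=\frac{1}{x+1}$. This lets me write the weights as $p_n(x)=\frac{x+1}{(x+n)(x+n+1)}$, with $\sum_n p_n\equiv 1$, so $\mathcal M$ is an average of compositions $w\circ\psi_n$ with $\psi_n(x)=\frac{1}{x+n}$. I then split
$$(\mathcal M w)'(x)=\sum_{n} p_n'(x)\, w(\psi_n(x)) + \sum_n p_n(x)\,\psi_n'(x)\, w'(\psi_n(x)).$$
For the first sum, since $\sum_n p_n'\equiv 0$, I rewrite it by Abel summation as $\sum_n \big(\sum_{k\le n}p_k'(x)\big)\big(w(\psi_n(x))-w(\psi_{n+1}(x))\big)$. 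Each difference $w(\psi_n)-w(\psi_{n+1})$ is an integral of $w'$ over $[\psi_{n+1},\psi_n]\subset[0,1]$, so it has a definite sign once the sign of $w'$ on $[0,1]$ is fixed. It then remains to check the sign of the partial sums $\sum_{k\le n}p_k'$; this is an explicit rational-function computation, and I expect these partial sums to be of one sign for $x\in[0,1]$. The second sum involves $\psi_n'=-(x+n)^{-2}$ multiplied by $w'$ evaluated at points of $[0,1]$.

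The main obstacle is the orientation. Each $\psi_n$ is orientation reversing, so the chain-rule sum carries a factor $-1$. To conclude that $(\mathcal M w)'\ge 0$ on $[0,1]$, the Abel-summed first term must have the same sign as this term and be consistent with the cone's orientation. I therefore intend to check first whether the two contributions combine into a nonnegative expression, using the explicit form of the partial sums $\sum_{k\le n}p_k'$. If they do not, the sign convention has to be arranged so that the orientation reversal of the branches is absorbed: for example, by pairing $\mathcal M$ with the reflection $w\mapsto -w$, or by working with the even iterate $\mathcal M^2$, for which the two sign reversals cancel. That is exactly the form in which positivity of $\lambda_2$-related quantities would later be used. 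Properness and nonempty interior of $C$ from the preceding lemmas are not needed for this step; only the explicit derivative formula is.
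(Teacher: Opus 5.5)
Your decomposition is the paper's equation (3): the paper also differentiates term by term, writing $(-\mathcal M f)'$ as $\sum_n(-p_n'(x))\,f(\frac{1}{x+n})$ plus a positive combination of the $f'(\frac{1}{x+n})$. The gap is that you stop before the one computation that decides the sign, and you leave open which inequality you are proving. In particular, the branch ``check whether the two contributions combine into a nonnegative expression'' cannot succeed. The partial sums are explicit. Telescoping gives $\sum_{k\le n}p_k(x)=(x+1)\bigl(\frac{1}{x+1}-\frac{1}{x+n+1}\bigr)=\frac{n}{x+n+1}$, so $S_n(x):=\sum_{k\le n}p_k'(x)=-\frac{n}{(x+n+1)^2}<0$. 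Moreover $S_N\to 0$ kills the boundary term $S_N\,w(\psi_N)$ in your Abel summation. Since $w(\psi_n)-w(\psi_{n+1})=\int_{\psi_{n+1}}^{\psi_n}w'\ge 0$ for $w\in C$, the first sum is $\le 0$. The second sum is $-\sum_n\frac{x+1}{(x+n)^3(x+n+1)}\,w'(\psi_n(x))\le 0$. So the two contributions do agree in sign, but the common sign is negative: $(\mathcal M w)'\le 0$ on $[0,1]$, i.e.\ $\mathcal M(C)\subseteq -C$.

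Hence the lemma cannot hold as literally written. If $0\ne f\in C$, then $f'(t)>0$ at some $t\in(0,1]$ (otherwise $f$ is constant, hence $0$). Writing $t=\psi_n(x)$ with $n\ge 1$ and $x\in[0,1]$, you get $(\mathcal M f)'(x)<0$, so $\mathcal M f\notin C$. This is consistent with $\mathcal M h_2=\lambda_2h_2$, where $h_2\in C$ and $\lambda_2<0$. What the paper actually proves is that $-\mathcal M$ preserves $C$; its proof begins with ``the image under $-\mathcal M$''. That is the form used afterwards for $u_0$-positivity and Krasnoselskii's theorem. So commit to your reflection option and state the result as $-\mathcal M(C)\subseteq C$. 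The $\mathcal M^2$ fallback is true but too weak for what follows.

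With that correction your argument is complete. Your Abel summation is a tidier treatment of the first term than the paper's, which splits at the sign change $n(x)$ of $-p_n'$ and compares every term with $f(\frac{1}{x+n(x)})$. Note also that the paper asserts $\sum_n(-p_n')=1$. Your identity $\sum_n p_n'\equiv 0$ is the correct one, and it is exactly what makes the paper's comparison argument close.
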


\begin{proof}
By definition, for $f \in C$ the image under $-\mathcal M$  takes the form
$$
-\mathcal M f(x) = - \sum_{n=1}^\infty \frac{x+1}{(x+n)(x+n+1)}  f \left( \frac{1}{x+n} \right)
\hbox{ for } 0 \leq x \leq 1.
$$
Differentiating in $x$ gives
$$
\begin{aligned}
(-\mathcal M f)'(x) &= \sum_{n=1}^\infty \frac{1}{(x+n)(x+n+1)} \left(\frac{x+1}{x+n} + \frac{x+1}{x+n+1}-1 \right) f\left( \frac{1}{x+n}\right)\cr
&\qquad+ \sum_{n=1}^\infty  \frac{1}{(x+n)^3(x+n+1)} 
\underbrace{ f'\left( \frac{1}{x+n}\right)}_{\geq 0}\cr
\end{aligned}
\eqno(3)
$$ 
and we see that the second term in (3) is positive.

It remains to consider the first term in (3) 
For $0 \leq x \leq 1$ we can choose $n(x)$ with 
$$
 \frac{1}{(x+n)(x+n+1)} \left(\frac{x+1}{x+n} + \frac{x+1}{x+n+1}-1 \right)
 \begin{cases}
 \geq 0 &\hbox{ for } n \leq n(x) \cr
  <  0 &\hbox{ for } n>  n(x)
 \end{cases}
$$
We can then write 
$$
\begin{aligned}
&\sum_{n=1}^\infty \frac{1}{(x+n)(x+n+1)} \left(\frac{x+1}{x+n} + \frac{x+1}{x+n+1}-1 \right) f\left( \frac{1}{x+n}\right)
\cr
&= \sum_{n=1}^{n(x)} 
\underbrace{\frac{1}{(x+n)(x+n+1)} \left(\frac{x+1}{x+n} + \frac{x+1}{x+n+1}-1 \right)}_{\geq 0} \underbrace{f\left( \frac{1}{x+n}\right)}_{\geq f\left( \frac{1}{x+n(x)}\right)}
\cr
&+ \sum_{n=n(x) + 1}^\infty 
\underbrace{\frac{1}{(x+n)(x+n+1)} \left(\frac{x+1}{x+n} + \frac{x+1}{x+n+1}-1 \right)}_{\leq 0} 
\underbrace{f\left( \frac{1}{x+n}\right)}_{\leq f\left( \frac{1}{x+n(x)}\right)}
\cr
\end{aligned}
\eqno(4)
$$
However, since 
$$\sum_{n=1}^\infty \frac{1}{(x+n)(x+n+1)} \left(\frac{x+1}{x+n} + \frac{x+1}{x+n+1}-1 \right) = 1$$
we see that the first (positive)  term in (4) is larger than the absolute value of the second (negative) term in (4).
\end{proof}

The cone $C$ gives a natural ordering with $f_1 \geq f_2 \iff f_1 - f_2 \in C$.

The following property is important in what follows.

\begin{defn}
We say that $- \mathcal M: \mathcal B_{\mathbb R} \to  \mathcal B_{\mathbb R}$ is $u_0$-positive (where $u_0 \in C\setminus\{0\}$) if for any $f \in C\setminus\{0\}$ there exist $\alpha \leq \beta$ and $n \geq 1$
such that $\alpha u_0 \leq (-\mathcal M)^n f \leq \beta u_0$ for all $n \geq 0$.
\end{defn}

We can now verify this property.

\begin{lem}
The operator 
$- \mathcal M: \mathcal B_{\mathbb R} \to  \mathcal B_{\mathbb R}$ is $u_0$-positive (with $n=1$). 
\end{lem}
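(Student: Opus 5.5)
Take $u_0 = w$, with $w(z) = z-1$ as in the proof that $C$ is reproducing. Since $w' \equiv 1$, the order relation $\alpha w \le g \le \beta w$ says exactly that $g-\alpha w$ and $\beta w - g$ lie in $C$. Both functions lie in $\mathcal B_{\mathbb R}$ automatically, because $g$ and $w$ do. So the relation reduces to the pointwise bound
$$\alpha \le g'(x) \le \beta \quad \hbox{for } 0\le x\le 1 .$$
With $n=1$ and $g=-\mathcal M f$, I need constants $0<\alpha\le\beta$ (positivity of $\alpha$ being the real content) with $\alpha\le (-\mathcal M f)'(x)\le\beta$ on $[0,1]$, for every $f\in C\setminus\{0\}$.

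\textbf{Upper bound.} Since $f\in\mathcal B$, both $f$ and $f'$ are bounded on $\overline D$. Each of the two series in (3) has coefficients of size $O(n^{-2})$ uniformly in $x\in[0,1]$, so $|(-\mathcal M f)'(x)|\le K\|f\|$ for an absolute constant $K$. Setting $\beta=K\|f\|$ settles this side.

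\textbf{Lower bound.} I use the decomposition (3)--(4) from the proof that $-\mathcal M$ preserves $C$. The first sum is $\ge 0$. To see this, write
$$c_n(x)=\frac{1}{(x+n)(x+n+1)}\Bigl(\frac{x+1}{x+n}+\frac{x+1}{x+n+1}-1\Bigr).$$
Since $\sum_n c_n=1$ and $f$ is increasing on $[0,1]$, comparing $f(1/(x+n))$ with $f(1/(x+n(x)))$ gives
$$\sum_n c_n f\Bigl(\tfrac1{x+n}\Bigr)\ \ge\ \sum_n c_n\, f\Bigl(\tfrac1{x+n(x)}\Bigr)\ \ge\ 0 .$$
The last inequality needs checking: the value $f(1/(x+n(x)))$ can be negative. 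Here I would use the normalisation $\int f\,d\mu=0$, which forces $f$ to change sign at a point $x_0\in(0,1)$.

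If this step is delicate, I simply drop the first sum when it is nonnegative and rely on the second. The second sum is
$$S(x)=\sum_n \frac{f'(1/(x+n))}{(x+n)^3(x+n+1)} \ \ge\ \frac{1}{(x+1)^3(x+2)}\,f'\Bigl(\frac1{x+1}\Bigr) \ \ge\ \frac1{24}\,f'\Bigl(\frac1{x+1}\Bigr).$$
This is strictly positive unless $f'$ vanishes somewhere on $[1/2,1]$. More robustly, note that $f'$ is real-analytic, nonnegative on $[0,1]$, and not identically zero, since $f\neq 0$ and $f$ has integral zero. Hence its zeros are isolated, so for each $x$ some term $f'(1/(x+n))$ is positive. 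By continuity and compactness of $[0,1]$, $S$ then has a positive minimum $\alpha$.

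\textbf{Main obstacle.} The hardest point is making the lower bound uniform when the infinitely many points $1/(x+n)$ accumulate at $0$ and $f'(0)$ might vanish. My plan is to work on the compact set $[0,1]$ with the continuous function $S(x)$, including the limit $n\to\infty$. Positivity at each $x$ is then enough, and it follows from the fact that a nonzero analytic $f'\ge 0$ cannot vanish at all the points $1/(x+n)$, $n\ge1$. Those points accumulate at $0$, so if they were all zeros, $f'$ would vanish identically by the identity theorem.
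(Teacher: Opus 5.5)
Your route is the paper's. You take $u_0$ with $u_0'\equiv1$, reduce the order relation to $\alpha\le(-\mathcal Mf)'\le\beta$ on $[0,1]$, and get $\alpha>0$ from the splitting (3). The $f'$-series is strictly positive by the identity theorem at the points $1/(x+n)\to0$ together with $\int f\,d\mu=0$. Your positive minimum of $S$ is equivalent to the paper's contradiction at a zero $x_0$ of $(-\mathcal Mf)'$, and your conclusion $f'\equiv0$ is the correct form of the paper's ``$f'$ is constant''.

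The gap is the step you flag and then leave open. The bound $(-\mathcal Mf)'\ge S$ needs $\sum_n c_n(x)f(1/(x+n))\ge0$ for \emph{every} $x\in[0,1]$. With $\sum_n c_n=1$, your chain only gives the lower bound $f(1/(x+n(x)))$, and this really can be negative. For example, take $f(x)=x-\bigl(\frac1{\log2}-1\bigr)\in C$ and $x=1$: then $n(1)=2$ and $f(1/3)<0$. The normalisation $\int f\,d\mu=0$ cannot repair this, since it is exactly what forces $f$ to change sign. And ``drop the first sum when it is nonnegative'' says nothing at the remaining points.

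The missing observation is that $\sum_n c_n(x)=0$; the value $1$ quoted with (4) is a slip. Indeed $c_n(x)=-\frac{d}{dx}\frac{x+1}{(x+n)(x+n+1)}$. Also $\sum_n\frac{x+1}{(x+n)(x+n+1)}=(x+1)\sum_n\bigl(\frac1{x+n}-\frac1{x+n+1}\bigr)=1$, which is just $\mathcal M1=1$. Termwise differentiation, legitimate since $c_n=O(n^{-2})$ uniformly on $[0,1]$, then gives $\sum_n c_n\equiv0$. Hence
$$\sum_n c_n(x)\, f\Bigl(\tfrac1{x+n}\Bigr)=\sum_n c_n(x)\Bigl[f\Bigl(\tfrac1{x+n}\Bigr)-f\Bigl(\tfrac1{x+n(x)}\Bigr)\Bigr]\ge0 .$$
This holds because for $n\le n(x)$ both factors are $\ge0$, and for $n>n(x)$ both are $\le0$, by monotonicity of $f$ on $[0,1]$. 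With this line inserted, your lower bound and hence the lemma go through.

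One further small correction, which the paper shares: $w(z)=z-1$ is not in $\mathcal B_{\mathbb R}$, since $\int_0^1(x-1)\,d\mu=\frac1{\log2}-2\neq0$. So $g-\alpha w\notin C$ for $\alpha\neq0$, contrary to your claim that it lies in $\mathcal B_{\mathbb R}$ automatically. Use instead $u_0(z)=z-\bigl(\frac1{\log2}-1\bigr)$, which has $\mu$-mean zero and $u_0'\equiv1$; nothing else in your argument changes.
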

\begin{proof}
Let $u_0(z) = z-1 \in C$ then $u_0'=1$.
Given $f \in \mathcal B_{\mathbb R}\setminus 0$ the image $-\mathcal M f \in \mathcal B_{\mathbb R}$ 
extends continuously to the closed disk $\overline D_{\mathbb R}$ and we can let
\footnote{On page 342 of \cite{MR2} $\alpha$ and $\beta$ get interchanged}
$$
\alpha = \inf_{0 \leq x \leq \beta}  (-\mathcal M f)'
\hbox{  and }
\beta  = \sup_{0 \leq x \leq \beta}  (-\mathcal M f)'
$$
It remains to check that $\alpha > 0$. By compactness of the interval, we can assume for a contradiction there exists $0 \leq x_0 \leq 1$ such that  $(-\mathcal M f)'(x_0) = 0$.  Taking $x=x_0$ in (3), and the positivity of the first summation, we can deduce that $f'(1/(x_0+n))=0$ for all $n \geq 1$.
\footnote{In \cite{MR2} there appears to be  typographical error where  function $f$  is written instead of  $f'$}
Since $f'(z)$ is an analytic function we deduce that it must be constant.  But since $\int_0^1 f d\mu=0$ we deduce that $f=0$, i.e., it is identically zero.    This contradicts the assumption on $f$.  

\end{proof}

The above properties are sufficient  to apply a theorem of Krasnoselskii (see  \cite{Mayer-book}, Theorem C.1, p.138) to the operator  $-\mathcal M : \mathcal B_{\mathbb R} \to \mathcal B_{\mathbb R}$. 

\begin{prop}\label{kras}
Let  $-\mathcal M : B_{\mathbb R} \to B_{\mathbb R}$ be as above.
\begin{enumerate}
\item There exists a simple  eigenvalue $-\lambda_2 >0$ for $-\mathcal M$.
\item For any $u_0 \in C\setminus \{0\}$ with  $0 < \alpha \leq \beta$
satisfying  
$$\alpha u_0 \leq (-Vu_0) \leq \beta u_0
\hbox{ then }
\alpha  \leq - \lambda_2 \leq \beta.$$
\item There is an associated  eigenvector $h_2 \in C$ unique up to a scaler; and 
\item 
all other eigenvalues have strictly smaller absolute value.   
\end{enumerate}
\end{prop}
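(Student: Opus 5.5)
The plan is to obtain the proposition as a direct application of Krasnoselskii's theorem on $u_0$-positive compact operators (Theorem C.1 of \cite{Mayer-book}) to $A=-\mathcal M$ acting on the real Banach space $\mathcal B_{\mathbb R}$ with the cone $C$. So the task is to check, one by one, that the hypotheses of that theorem hold.

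First, the structural conditions on the cone. $C$ is closed in $\mathcal B_{\mathbb R}$: the norm controls $\sup|f'|$ on $\overline D\supset[0,1]$, so $f'(x)\ge 0$ passes to limits. By the three lemmas above, $C$ is proper, reproducing, and has nonempty interior.

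Second, the conditions on the operator. $A$ maps $\mathcal B_{\mathbb R}$ into itself and is compact, because it is conjugate to $\mathcal L$ by the bounded invertible multiplication by $(x+1)$, and $\mathcal L$ is nuclear. I would state briefly why this conjugation is legitimate on $\overline D$: $x+1$ does not vanish there, since $|z-1|\le 3/2$ gives $\mathrm{Re}\,z\ge -1/2$. Then $A(C)\subset C$ by the lemma that $\mathcal M$ preserves the cone, read with the sign convention of that proof. Finally, $A$ is $u_0$-positive with $u_0(z)=z-1$ and $n=1$, by the preceding lemma. Strictly, Krasnoselskii needs the two-sided bound $\alpha u_0\le Af\le\beta u_0$ with $\alpha>0$ in the cone order. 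That means both $Af-\alpha u_0$ and $\beta u_0-Af$ must have nonnegative derivative on $[0,1]$, which is exactly the statement $\alpha\le (Af)'\le\beta$ there. Since these are differences of elements of $\mathcal B_{\mathbb R}$, the mean-zero condition is automatic.

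With the hypotheses in place, the theorem gives the conclusions directly:
\begin{enumerate}
\item a positive spectral radius $r(A)$ that is a simple eigenvalue; we call it $-\lambda_2$, with $\lambda_2$ an eigenvalue of $\mathcal M$ and hence of $\mathcal L$;
\item an eigenvector in $C$, unique up to scalar (item (3));
\item strict dominance over the rest of the spectrum (item (4)).
\end{enumerate}
Positivity of $r(A)$ follows from $u_0$-positivity: iterating $Au_0\ge\alpha u_0$ gives $A^n u_0\ge\alpha^n u_0$, and since $C$ is normal (by closedness and nonempty interior in this setting) this yields $r(A)\ge\alpha>0$.

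For the bracketing estimate (2), I would argue directly as in Collatz--Wielandt. Let $h_2\in\mathrm{int}$-type position relative to $u_0$, so that $h_2\ge c\,u_0$ and $u_0\ge c' h_2$ for some $c,c'>0$; this comes from $u_0$-positivity applied to $h_2$, together with $u_0'=1$ and $h_2'>0$ on the compact interval. From $Au_0\le\beta u_0$ we get $A^n u_0\le\beta^n u_0$, and hence
\[
(-\lambda_2)^n c\,u_0\le(-\lambda_2)^n h_2=A^n h_2\le (1/c')A^n u_0\le (1/c')\beta^n u_0 .
\]
Taking $n$-th roots gives $-\lambda_2\le\beta$; the lower bound $-\lambda_2\ge\alpha$ is symmetric.

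I expect the main obstacle to be the identification of $r(A)$ with the quantity the paper calls $-\lambda_2$. The cone lives on the mean-zero subspace, which removes the eigenvalue $1$. I would need to remark that the spectrum of $\mathcal M$ on $\mathcal B_{\mathbb R}$ is that of $\mathcal L$ on $\mathcal B$ minus the leading eigenvalue $1$, so the dominant eigenvalue found here is indeed the second one. Its negativity as an eigenvalue of $\mathcal M$ is then automatic from $r(A)>0$.
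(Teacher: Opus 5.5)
Your overall route is the paper's. Its proof is just the appeal to Krasnoselskii's theorem after the cone lemmas, and your checks of closedness, compactness, and the splitting off of the constants (which identifies $r(-\mathcal M)$ with $-\lambda_2$) are more careful than the paper's. However, two of your supplementary arguments are wrong as written, though both are easily repaired.

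\textbf{Normality.} The claim that $C$ is normal ``by closedness and nonempty interior'' is false, both in general and here. The order only sees $f'$ on $[0,1]$, while the norm sees all of $\overline D$. For instance, suppose $u_0'=1$ and set $g_N(z)=\frac12\bigl(z+\frac{\sin Nz}{N}\bigr)+c_N$, with $c_N$ chosen so that $\int g_N\,d\mu=0$. Then $0\le g_N'=\frac12(1+\cos Nz)\le u_0'$ on $[0,1]$, so $0\le g_N\le u_0$ in the cone order. Yet $|g_N'(1+\frac32 i)|\ge\frac12(\sinh\frac{3N}{2}-1)\to\infty$, so $\|g_N\|$ is unbounded while $\|u_0\|$ is fixed.

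This step is not load-bearing, since positivity of $r(A)$ is part of Krasnoselskii's conclusion. If you want it directly, use the bounded functional $\phi(f)=f(1)-f(0)=\int_0^1 f'$. It is nonnegative on $C$ and satisfies $\phi(u_0)=1$. Hence $\alpha^n\le\phi(A^nu_0)\le 2\|A^n\|\,\|u_0\|$, which gives $r(A)\ge\alpha$.

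Incidentally, $z-1$ is not $\mu$-mean-zero, since $\int_0^1\frac{x-1}{x+1}\,dx=1-2\log 2\neq 0$. So your remark that ``the mean-zero condition is automatic'' needs $u_0(z)=z-\int_0^1 x\,d\mu(x)$ instead, in your argument and in the paper's. This changes nothing else, since only $u_0'=1$ is ever used.

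\textbf{Item (2).} The second issue is in your direct proof of (2). There $u_0$ is an arbitrary element of $C\setminus\{0\}$, so you cannot invoke $u_0'=1$.

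The comparison $h_2\ge c\,u_0$ holds for any such $u_0$. Indeed, $h_2'=(Ah_2)'/(-\lambda_2)$ is bounded below on $[0,1]$ by the $u_0$-positivity lemma, while $u_0'$ is bounded above there.

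The reverse comparison $u_0\ge c'h_2$, which you need in both the upper and lower chains, requires $\min_{[0,1]}u_0'>0$. This is not automatic for a general element of $C$, but it follows from the hypothesis of (2). We have $\beta u_0'\ge (Au_0)'$ on $[0,1]$, and $(Au_0)'>0$ there by the same lemma applied to $u_0\in C\setminus\{0\}$. Hence $u_0'\ge \beta^{-1}\min_{[0,1]}(Au_0)'>0$. With that line added, your Collatz--Wielandt chain goes through as written.
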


\section{Min-max bounds on the second eigenvalue $\lambda_2$}

The results in the previous section  lead to a  useful  min-max bound for this eigenvalue $\lambda_2$ of the  following form:
\footnote{This is similar to that in \cite{Mayer2}}

\begin{lem}[after Mayer-Roepstorff (cf. \cite{MR2} equation (4.12)]\label{mayer}
Let $f \in C$  then 
$$
\min_{0 \leq x \leq 1}  
\frac{(\mathcal M f)'(x)}{f'(x)}
\leq 
|\lambda_2| \leq \max_{0 \leq x \leq 1} 
\frac{(\mathcal M f)'(x)}{f'(x)}.  \eqno(5)
$$
\end{lem}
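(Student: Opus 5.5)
The plan is to obtain (5) directly from part (2) of Proposition \ref{kras} (the Krasnoselskii theorem), taking $u_0 = f$. First a sign convention. By the lemma above, $-\mathcal M$ preserves $C$, so for $f \in C$ we have $(-\mathcal M f)'(x) \geq 0$ on $[0,1]$. Also $-\lambda_2 = |\lambda_2| > 0$. So the quotient in (5) should be read as $(-\mathcal M f)'(x)/f'(x)$, i.e.\ as the absolute value of $(\mathcal M f)'(x)/f'(x)$; with this reading both sides of (5) are nonnegative.

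\textbf{Step 1: define $\alpha$ and $\beta$.} Assume first that $f'(x) > 0$ for all $x \in [0,1]$. Then the quotient $(-\mathcal M f)'/f'$ is continuous on the compact interval $[0,1]$, and we set
$$\alpha = \min_{0\le x\le 1} \frac{(-\mathcal M f)'(x)}{f'(x)}, \qquad \beta = \max_{0\le x\le 1} \frac{(-\mathcal M f)'(x)}{f'(x)} .$$
Multiplying through by $f'(x) > 0$ gives $\alpha f'(x) \le (-\mathcal M f)'(x) \le \beta f'(x)$ on $[0,1]$. This says exactly that $(-\mathcal M f) - \alpha f \in C$ and $\beta f - (-\mathcal M f) \in C$. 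Membership in $\mathcal B_{\mathbb R}$ is automatic, because $\mathcal B_{\mathbb R}$ is a vector space invariant under $\mathcal M$. Hence, in the cone ordering,
$$\alpha f \le -\mathcal M f \le \beta f .$$

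\textbf{Step 2: apply Proposition \ref{kras}.} We need $\alpha > 0$. This follows from the argument in the $u_0$-positivity lemma: if $(-\mathcal M f)'(x_0) = 0$ for some $x_0 \in [0,1]$, then (3) forces $f'(1/(x_0+n)) = 0$ for all $n \ge 1$. These points accumulate at $0$, so analyticity gives $f' \equiv 0$, and the normalisation $\int_0^1 f \, d\mu = 0$ then gives $f = 0$. Thus $0 < \alpha \le \beta$, and Proposition \ref{kras}(2) with $u_0 = f$ yields $\alpha \le -\lambda_2 = |\lambda_2| \le \beta$, which is (5).

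\textbf{Degenerate case.} If $f'$ vanishes somewhere on $[0,1]$ but $f \neq 0$, the quotient has a pole. At such a point the numerator $(-\mathcal M f)'$ is still strictly positive by the argument of Step 2, so the maximum in (5) is $+\infty$ and the upper bound is trivial. For the lower bound, the inequality $\alpha f \le -\mathcal M f$ still holds with $\alpha$ the infimum of the quotient over the points where $f' > 0$, since the derivative inequality is automatic where $f' = 0$. One then runs the same argument as in Step 3 below for the lower half alone.

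\textbf{Step 3: the main obstacle.} The main obstacle is that Proposition \ref{kras}(2) is quoted as a black box, and its hypotheses must really be met. If I had to prove it directly, I would iterate. Since $-\mathcal M$ is order preserving, $\alpha^n f \le (-\mathcal M)^n f \le \beta^n f$ for all $n$. Comparing derivatives at a fixed point gives
$$\alpha^n f'(x) \le ((-\mathcal M)^n f)'(x) \le \beta^n f'(x).$$
On the other hand, Proposition \ref{kras}(4) (the spectral gap) gives $(-\mathcal M)^n f = c\,|\lambda_2|^n h_2 + o(|\lambda_2|^n)$. The delicate point is showing that $c \neq 0$, i.e.\ that $f$ has a nonzero component along $h_2$. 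I would get this from a strictly positive dual eigenfunctional $\phi$ for the adjoint of $-\mathcal M$, which is positive on $C$ and nonzero on $C \setminus \{0\}$ by $u_0$-positivity (Krein--Rutman applied to the adjoint). Then $c = \phi(f) > 0$. Taking $n$-th roots in the derivative inequality, at a point where $h_2' > 0$, gives $\alpha \le |\lambda_2| \le \beta$.
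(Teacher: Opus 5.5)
Your proposal is correct and takes the same route as the paper. You sandwich $\alpha f \le -\mathcal M f \le \beta f$ in the cone order, with $\alpha,\beta$ the extreme values of the derivative quotient on $[0,1]$, and then apply the Krasnoselskii bound with $u_0=f$; that bound is item (2) of Proposition~\ref{kras}, which the paper's proof cites as item (3). Your write-up is also tighter than the paper's: you use $(-\mathcal M f)'/f'$ throughout (the paper's proof writes ratios of function values and an infimum in place of $\beta$), you fix the sign so the quotient is compared with $-\lambda_2=|\lambda_2|$, and you verify the hypothesis $\alpha>0$ explicitly.
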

It is the  explicit bound (5) in this lemma  which serves  to make the numerical estimates  we present later rigorous.

\begin{proof}
The proof of Lemma \ref{mayer} is sketched in Section 4 of \cite{MR2}.
The starting point  is that when one  considers the quotient space $\mathcal B/\mathbb C$ and on this quotient  space one can use the norm $\|w + \mathbb C\| =  \|w'\|_\infty$.
The operator $\mathcal M : \mathcal B \to \mathcal B$  then induces a well defined operator $V: \mathcal B/\mathbb C \to \mathcal B/\mathbb C$ on the quotient space. Moreover, the spectrum of $V$ coincides with that of $\mathcal L$ {\it except that $\lambda_1$ no longer appears 
in the spectrum of $V$}  and thus $\lambda_2 < 0$ is  now the largest eigenvalue (in modulus) for $V$.

If $f$ is any function in the  cone $C$  then so is its image $-\mathcal M f \in C$ and, moreover, 
$$
\underbrace{ \left( \inf_{0 \leq y \leq 1} \left\{  \frac{(-\mathcal M f)(y)}{f(y)}\right\} \right)}_{=:\alpha} f(x)
\leq
 (-\mathcal M f)(x)
\leq
\underbrace{\left(  \inf_{0 \leq y \leq 1} \left\{  \frac{(-\mathcal Mf)(y)}{f(y)}\right\} \right) }_{=:\beta}f(x).
$$
for any $0 \leq x \leq 1$.   We can now apply item (3) of Proposition  \ref{kras} to deduce that:
$$
\underbrace{ \left( \inf_{0 \leq y \leq 1} \left\{  \frac{(-\mathcal Mf)(y)}{f(y)}\right\} \right)}_{=:\alpha}
\leq
-\lambda_2
\leq
\underbrace{\left(  \inf_{0 \leq y \leq 1} \left\{  \frac{(-\mathcal Mf)(y)}{f(y)}\right\} \right) }_{=:\beta}.
$$

\end{proof}

\begin{example}
In (\cite{MR2}, Remark, p. 343)  they consider the particular  example of $f(z) = \frac{z+1}{z+r} - c$ with $r = 1.14617$ and 
suitable $c$.   This allows them to  deduce the rigorous bound $-0.2995 \geq \lambda_2  \geq - 0.3038$.
\end{example}

In a later section we will discuss how to  choose $f \in \mathcal B$ to be a suitable polynomial which leads to improved bounds on $\lambda_2$. 

\begin{rem}
If we used the original result of \cite{MR2} then one would obtain bounds
$$
\min_{-\frac{1}{2} \leq x \leq \leq \frac{5}{2}}  
\frac{(\mathcal M f)'(x)}{f'(x)}
\leq 
|\lambda_2| \leq \max_{-\frac{1}{2} \leq x \leq \frac{5}{2}} 
\frac{(\mathcal M f)'(x)}{f'(x)} 
$$
which leads to weaker bounds in our examples in a later section.
\end{rem}

\begin{rem}
The approach of Babenko \cite{Babenko}, Flajolet-Vall\'ee \cite{Flajolet} and Briggs \cite{Briggs} to estimating $\lambda_2$ was based on approximating the transfer operator by a matrix representing the  truncation to polynomials the action on Taylor expansions about $z=1/2$.  However, this presents problems in rigorously estimating the error term. For example, Briggs considered a $1200 \times 1200$ but with no error estimates presented \cite{Briggs}.    In \cite{MR1}, Mayer and Roepstorff  present an alternative approach by relating  $\mathcal L$ to an integral operator.  
\end{rem}
\section{Hurwicz zeta functions}

We need to understand how $\mathcal M$ acts on monomials.   
It has been observed (for example in \cite{Flajolet}) that these can be conveniently presented in terms of the Hurwicz zeta function to address the  difficulty of dealing  with the infinite summations.  
 In particular, when $w_k(x) = x^k$ (for $k \geq 0$) for  $0 \leq x \leq 1$  then 
$$
\begin{aligned}
\mathcal M w_k(x) &= (x+1)\sum_{n=1}^\infty \frac{1}{(x+n)(x+n+1)}
\left(
\frac{1}{x+n}
\right)^k\cr
&=
(x+1)
\sum_{n=1}^\infty \frac{1}{(x+n)^{k+1}(x+n+1)}\cr
\end{aligned}
$$
for $k \geq 1$ and $\mathcal Mw_0(x) = 1$.
Observe that 
$$
\begin{aligned}
 \frac{1}{(x+n)^{k+1}(x+n+1)} & =  \frac{1}{(x+n)^k} \left(\frac{1}{(x+n) (x+n+1)}\right) \cr
 &=   \frac{1}{(x+n)^k} \left( \frac{1}{x+n} - \frac{1}{x + n +1} \right)\cr
 &=  \frac{1}{(x+n)^{k+1}} - \frac{1}{(x+n)^k(x + n +1)}\cr
\end{aligned}
$$
and thus multiplying by $(x+1)$ and summing over $n \geq 1$ we can deduce that
$$
\mathcal M w_k(x) =  
(x+1)
(L(x, k+1)  - \mathcal M w_{k-1}(x)
$$s
where 
$$
L(x, k+1) =   \sum_{n=1}^\infty \frac{1}{(x+n)^{k+1}}=  \zeta(k+1, x)-  \frac{1}{x^{k+1}}
$$
and $\zeta(s, x) = \sum_{n=0}^\infty \frac{1}{(x+n)^s}$ is the Hurwicz zeta function.  
By induction in $k$ we can write
$$
\begin{aligned}
\mathcal M w_k(x) & =  
(x+1)L(x, k+1)  - \mathcal M w_{k-1}(x) \cr
\mathcal M w_{k-1}(x) & =   
(x+1)L(x, k)  -  \mathcal M w_{k-2}(x)\cr
&\qquad \vdots \cr
\mathcal M w_{2}(x) & =  (x+1)L(x, 3) 
-  \mathcal M w_{1}(x)  \cr
\mathcal M w_{1}(x) & = (x+1) L(x, 2) 
-  \underbrace{\mathcal M w_{0}(x)}_{= 1} \cr
\end{aligned}
$$
For $k \geq 3$ we  can use these identities  to write 
$$
\begin{aligned}
\mathcal M w_k(x) & =  (x+1) \left( L(x, k+1)  -  L(x, k)\right)  +  \mathcal M w_{k-2}(x) \cr
& =  (x+1) \left(L(x, k+1)  -  L(x, k)  +  L(x, k-1)\right)  - \mathcal M w_{k-3}(x)  \cr
&\qquad \ldots\cr
&= (x+1) \left(L(x, k+1)  -  L(x, k)  +  L(x, k-1)   - \cdots  + (-1)^{k-1} L(x, 2) \right) +  (-1)^{k}
\underbrace{\mathcal M w_0(x)}_{=1}
\end{aligned}
$$
We can succinctly write this as:  

\begin{lem} We can write 
$$
\mathcal M w_k(x)  = 
\begin{cases}
\sum_{t=2}^{k+1} (-1)^{k+1 - t}  L(x, t) 
+ (-1)^{k}& \hbox{ for $k \geq 1$}\cr
1 & \hbox{ for $k = 0$.}\cr
\end{cases}
$$

\end{lem}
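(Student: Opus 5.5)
The plan is to prove the formula by induction on $k$, using the one-step recurrence derived just before the statement. One point needs flagging first. The computation preceding the lemma produces the factor $(x+1)$ in front of each Hurwicz-type sum, so the identity I will actually establish is
$$
\mathcal M w_k(x) = (x+1)\sum_{t=2}^{k+1} (-1)^{k+1-t} L(x,t) + (-1)^k \qquad (k\ge 1),
$$
and I read the statement as having the factor $(x+1)$ multiplying the sum. Without that factor the case $k=1$ already fails, since $\mathcal M w_1 = (x+1)L(x,2)-1$.

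\textbf{Base case $k=0$.} Throughout I use the form $\mathcal M w(x) = (x+1)\sum_{n\ge1}\frac{1}{(x+n)(x+n+1)}\,w\!\left(\frac{1}{x+n}\right)$, as in the proof that $\mathcal M$ preserves $C$. The displayed argument $1/(x+1)$ in the definition of $\mathcal M$ is a misprint for $1/(x+n)$. For $k=0$ the sum telescopes:
$$
\sum_{n\ge1}\Bigl(\tfrac{1}{x+n}-\tfrac{1}{x+n+1}\Bigr)=\tfrac{1}{x+1},
$$
so $\mathcal M w_0 = 1$.

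\textbf{Recurrence.} For $k\ge1$ I use the partial fraction identity
$$
\frac{1}{(x+n)^{k+1}(x+n+1)} = \frac{1}{(x+n)^{k+1}} - \frac{1}{(x+n)^k(x+n+1)}.
$$
I multiply by $(x+1)$ and sum over $n\ge1$. For $0\le x\le1$ each of the three series converges absolutely, since every term is $O(n^{-2})$ and $k+1\ge2$, so the sums may be split termwise. This gives
$$
\mathcal M w_k = (x+1)L(x,k+1) - \mathcal M w_{k-1}.
$$

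\textbf{Induction.} The case $k=1$ is the recurrence together with $\mathcal M w_0=1$, which gives $(x+1)L(x,2)-1$; this matches the formula, since the single sign is $(-1)^{0}=+1$ and the constant is $(-1)^1$. For the inductive step, assume the formula holds for $k-1$. Substituting it into the recurrence gives
$$
(x+1)L(x,k+1) - (x+1)\sum_{t=2}^{k}(-1)^{k-t}L(x,t) - (-1)^{k-1}.
$$
Absorbing the minus sign turns each coefficient $(-1)^{k-t}$ into $(-1)^{k+1-t}$, and the $t=k+1$ term is exactly $(x+1)L(x,k+1)$ with sign $+1$. The constant becomes $(-1)^k$, which completes the induction.

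There is no real obstacle here. The only points needing care are the absolute convergence that justifies splitting the series, and the sign bookkeeping in the index shift.
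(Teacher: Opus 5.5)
Your proof is correct and follows the paper's route: you derive the partial-fraction recurrence $\mathcal M w_k = (x+1)L(x,k+1) - \mathcal M w_{k-1}$ and unroll it down to $\mathcal M w_0 = 1$, which you phrase as an induction. You are also right that the factor $(x+1)$ must multiply the alternating sum: the paper's own unrolled derivation carries that factor, and it was dropped in the displayed statement.
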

In particular, since the Hurwicz zeta function is a function  for which the 
values can be accurately estimated
and thus   $\mathcal Mw_k(x)$
can be  efficiently estimated.  

\begin{rem}
In practice, one can estimate the Hurwicz zeta function using the Euler-Maclaurin 
 approximation or, more directly,   estimate the series $L(x, k+1)$
\end{rem}

\section{Estimates on $\lambda_2$}

Fix $m \geq 2$.
After suitable scaling to $[0,1]$, we  can choose Chebychev points $\{x_j\}_{j=1}^m \subset [0,1]$
defined by $$x_j = \frac{1}{2} + \frac{1}{2}  \cos\left(\frac{\pi (2j - 1)}{2m}\right)$$ and Lagrange polynomials 
$\{\ell_i(x)\}_{i=1}^m$ on $[0,1]$ 
defined by
$$
\ell_j(x) = 
\frac{\prod_{i} \left(x - x_j \right)}{\prod_{i \neq j} \left(x_i - x_j \right)}
$$
so that we then have that 
$$
\ell_i(x_j) = 
\begin{cases}
1 & \hbox{ if } i = j \cr
0 & \hbox{ if } i \neq  j. \cr
\end{cases}
$$
We proceed as follows.

\medskip
\noindent
{\bf Step 1.}
We can then associate the $m \times m$ matrix with entries
$$
M(i,j) = (\mathcal M \ell_i)(x_j)
\hbox{ for } 1 \leq i,j \leq m.
$$
If we write $\ell_i(x) = \sum_{r=0}^{m-1} a_r^{(i)} w_r(x)$ 
(for $1 \leq i \leq m$)
then we can rewrite
$$
(\mathcal M \ell_i)(x) =  \sum_{r=0}^{m-1} a_r^{(i)} (\mathcal M w_r)(x)
\hbox{ for } i = 1, \cdots, m
$$
and thus 
$$
M(i,j) = (\mathcal M \ell_i)(x_j) =   \sum_{r=0}^{m-1} a_r^{(i)} (\mathcal M w_r)(x_j)
\hbox{ for } 1 \leq i,j \leq m.
$$

\medskip
\noindent
{\bf Step 2.}
The second eigenvalue (in absolute value) for  $M$ will have  
a (left)  eigenvector $v = (v_i)_{i=1}^M$.   We can then choose 
$$f(x) = \sum_{i=1}^m v_i \ell_i(x).$$

\medskip
\noindent
{\bf Step 3.}
Finally, 
$$
(\mathcal M f)(x) 
= \sum_{i=1}^m v_i  (\mathcal M \ell_i) (x)
=
\sum_{i=0}^{m-1} v_i  
\left( \sum_{r=0}^{m-1} a_r^{(i)} (\mathcal M w_r)(x)\right)
$$
which we can evaluate these using the Lemma to get rigorous  accurate numerical estimates on $L(x, t)$.
We can adjust the choice of $m$ as required to improve the approximation. 

\begin{rem}
The efficiency of this approach depends on the time needed to evaluate the maximal eigenvalue and eigenvector of an $m \times m$ matrix.  A practical issue is to determine the minimum $\alpha$
and the maximum $\beta$ in (5) which benefits from 
 the functions $f$ and $\mathcal M f$ being analytic.
\end{rem}

\begin{example}[$m=6$] 
When $m=6$ the above approach suggests a degree $5$ polynomial:
$$
\begin{aligned}
f(x) = 0.619273
- 
 2.004011
 x + 
 1.415702
 x^2 - 
 0.883438
 x^3 + 
 0.404095
 x^4 - 
 0.091799
  x^5
 \end{aligned}
$$
\begin{figure}[h!]
\centerline{
\includegraphics[width=0.38\textwidth]{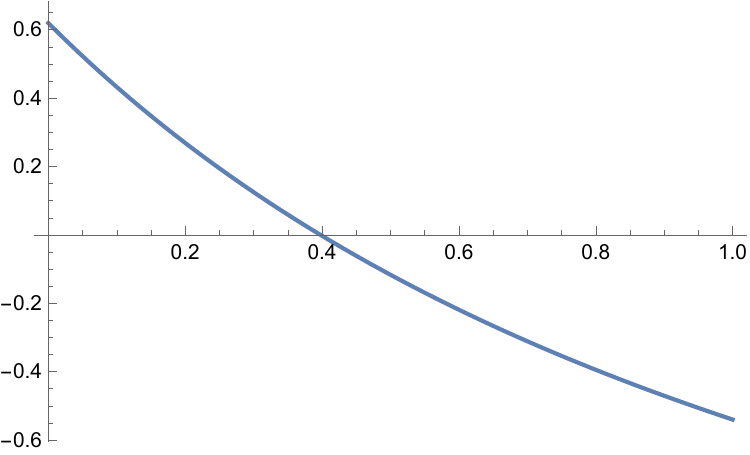} \hskip 1cm
\includegraphics[width=0.38 \textwidth]{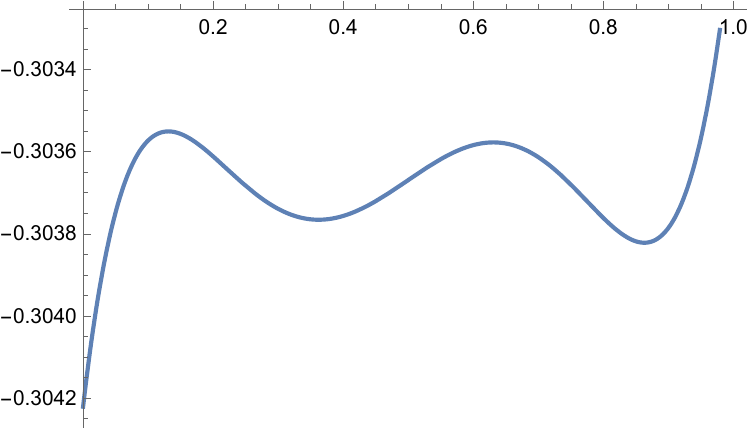}}
\caption{For example 6.2 (a) The polynomial function $f(x)$; (b) The function $(Vf)'(x)/f'(x)$}
\end{figure}
Then (5)  leads to bounds 
$$
-0.304220
\leq \lambda_2 \leq 
-0.303033
$$
accurate to $2$ decimal places.
\end{example}

\begin{example}[$m=12$] 
When $m=12$ the above approach suggests a degree $11$ polynomial:
$$
\begin{aligned}
f(x)&=0.437917
-  1.418027
 x + 
 1.014254
 x^2 - 
 0.694128
 x^3 + 
 0.457471
 x^4 - 
 0.291319
 x^5 \cr 
  &\quad + 
 0.178402
 x^6 - 
 0.102401
  x^7 + 
 0.051754
 x^8 - 
 0.020695
  x^9 + 
 0.005557
 x^{10} - 
 0.000725
 x^{11}
 \end{aligned}
$$
\begin{figure}[h!]
\centerline{
\includegraphics[width=0.38\textwidth]{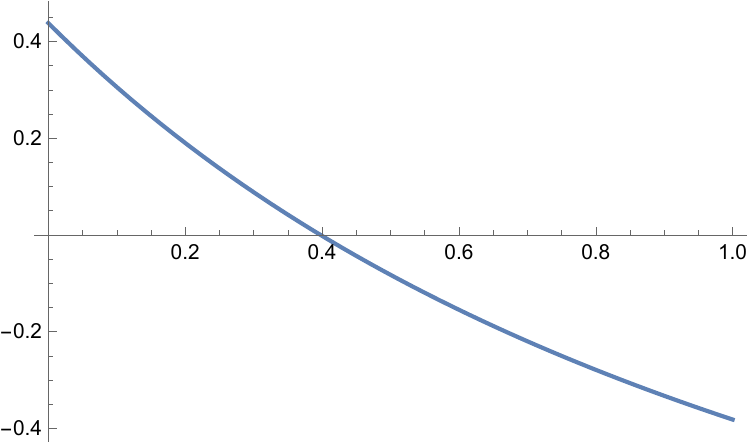} \hskip 1cm
\includegraphics[width=0.38 \textwidth]{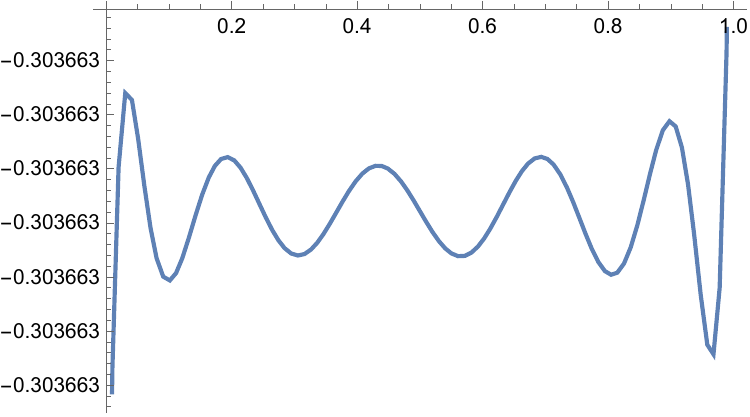}
}
\caption{For example 6.3 (a) The polynomial function $f(x)$; (b) The function $(Vf)'(x)/f'(x)$}
\end{figure}
Then (5)  leads to bounds 
$$
\begin{aligned}
&-0.3036630081
 \leq \lambda_2 \leq 
-0.3036629966
\end{aligned}
$$
accurate to $4$ decimal places.
\end{example}

\begin{example}[$m=50$] 
When $m=80$ the above approach suggests a degree $49$ polynomial.
Then (5)  leads to bounds 
$$
\begin{aligned}
&-0.303663002898732658597448121901556233110877352253659370 \cr
&\qquad \leq \lambda_2 \leq 
-0.303663002898732658597448121901556233110877352253656714
\end{aligned}
$$
accurate to $51$ decimal places.
\end{example}

\begin{example}[$m=80$] 
When $m=80$ the above approach suggests a degree $79$ polynomial.
Then (5)  leads to bounds 
$$
\begin{aligned}
&-0.3036630028987326585974481219015562331108773522536578951882454814671440498 \cr
&\qquad \leq \lambda_2 \leq 
-0.30366300289873265859744812190155623311087735225365789518824548146730843942
\end{aligned}
$$
accurate to $67$ decimal places.
\end{example}

Further improvements would follow by choosing larger values of $m$.

\end{document}